\def\thmhead@plain#1#2#3{%
  \thmname{#1}\thmnumber{\@ifnotempty{#1}{ }\@upn{#2}}%
  \thmnote{ {\the\thm@notefont#3}}}
\let\thmhead\thmhead@plain
\newcommand{\xqedhere}[2]{%
  \rlap{\hbox to#1{\hfil\llap{\ensuremath{#2}}}}}
\theoremstyle{definition}
\newtheorem{Theorem}{Theorem}[section]
\newtheorem{Remark}[Theorem]{Remark}
\newtheorem{Example}[Theorem]{Example}
\newtheorem{Question}[Theorem]{Question}
\newtheorem{Corollary}[Theorem]{Corollary}
\renewcommand*{\qed}{\hfill\ensuremath{\blacksquare}}
\renewcommand\footnotemark{}
\begin{document}

\title{When $\pi(n)$ does not divide $n$}
\author{Germ\'an Andr\'es Paz}
\date{January 26, 2015}

\maketitle

\begin{abstract}
Let $\pi(n)$ denote the prime-counting function and let
\begin{equation*}
f(n)=\left|\left\lfloor\log n-\lfloor\log n\rfloor-0.1\right\rfloor\right|\left\lfloor\frac{\left\lfloor n/\lfloor\log n-1\rfloor\right\rfloor\lfloor\log n-1\rfloor}{n}\right\rfloor\text{.}
\end{equation*}
In this paper we prove that if $n$ is an integer $\ge 60184$ and $f(n)=0$, then $\pi(n)$ does not divide $n$. We also show that if $n\ge 60184$ and $\pi(n)$ divides $n$, then $f(n)=1$. In addition, we prove that if $n\ge 60184$ and $n/\pi(n)$ is an integer, then $n$ is a multiple of $\lfloor\log n-1\rfloor$ located in the interval $[e^{\lfloor\log n-1\rfloor+1},e^{\lfloor\log n-1\rfloor+1.1}]$. This allows us to show that if $c$ is any fixed integer $\ge 12$, then in the interval $[e^c,e^{c+0.1}]$ there is always an integer $n$ such that $\pi(n)$ divides $n$.

Let $S$ denote the sequence of integers generated by the function $d(n)=n/\pi(n)$ (where $n\in\mathbb{Z}$ and $n>1$) and let $S_k$ denote the $k$th term of sequence $S$. Here we ask the question whether there are infinitely many positive integers $k$ such that $S_k=S_{k+1}$.
\end{abstract}

{\bf Keywords:} \emph{bounds on the prime-counting function, explicit formulas for the prime-counting function, intervals, prime numbers, sequences}

{\bf 2010 Mathematics Subject Classification:} 00-XX $\cdot$ 00A05 $\cdot$ 11-XX $\cdot$ 11A41 $\cdot$ 11Bxx

\section{Notation}

Throughout this paper the number $n$ is always a positive integer. Moreover, we use the following notation:

\begin{itemize}

\item $|\cdot|$ (absolute value)

\item $\lceil\cdot\rceil$ (ceiling function)

\item $\mid$ (divides)

\item $\nmid$ (does not divide)

\item $\lfloor\cdot\rfloor$ (floor function)

\item $\operatorname{frac}(\cdot)$ (fractional part)

\item $\log$ (natural logarithm)

\end{itemize}

\section{Introduction}

Determining how prime numbers are distributed among natural numbers is one of the most difficult mathematical problems. This explains why the prime-counting function $\pi(n)$ (which counts the number of primes less than or equal to a given number $n$) has been one of the main objects of study in Mathematics for centuries.

In \cite{Gaitanas} Gaitanas obtains an explicit formula for $\pi(n)$ that holds infinitely often. His proof is based on the fact that the function $d(n)=n/\pi(n)$ takes on every integer value greater than 1 (as proved by Golomb \cite{Golomb}) and on the fact that $x/(\log x-0.5)<\pi(x)<x/(\log x-1.5)$ for $x\ge 67$ (as shown by Rosser and Schoenfeld \cite{Rosser_Schoenfeld}). In this paper we find alternative expressions that are valid for infinitely many positive integers $n$, and we also prove, among other results, that if $n\ge 60184$ and
\begin{equation*}
\left|\left\lfloor\log n-\lfloor\log n\rfloor-0.1\right\rfloor\right|\left\lfloor\frac{\left\lfloor n/\lfloor\log n-1\rfloor\right\rfloor\lfloor\log n-1\rfloor}{n}\right\rfloor
\end{equation*}
equals 0, then $\pi(n)$ does not divide $n$.

We will place emphasis on the following three theorems, which were proved by Golomb, Dusart, and Gaitanas respectively:

\begin{Theorem}[\cite{Golomb}]\label{theorem0}
The function $d(n)=n/\pi(n)$ takes on every integer value greater than 1.\qed
\end{Theorem}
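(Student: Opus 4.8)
The plan is to fix an arbitrary integer $m\ge 2$ and produce some integer $n$ with $n/\pi(n)=m$, that is, with $m\pi(n)=n$. To do this I would study the integer-valued function $h(n)=m\pi(n)-n$ and show it attains the value $0$. The two elementary facts driving the argument are: (i) as $n$ increases by $1$, the jump $\pi(n)-\pi(n-1)$ equals $1$ if $n$ is prime and $0$ otherwise, so $h(n)-h(n-1)=m-1\ge 1$ when $n$ is prime and $h(n)-h(n-1)=-1$ when $n$ is composite (or $n=1$); thus $h$ can only \emph{increase} at primes (by possibly a large amount), while it can only \emph{decrease} by exactly $1$, and only at non-primes. (ii) $h$ is positive somewhere and negative somewhere: $h(3)=m\pi(3)-3=2m-3\ge 1>0$ since $m\ge 2$, whereas $h(n)\to-\infty$ because $\pi(n)/n\to 0$; in particular we may fix an integer $N>3$ with $h(N)<0$.

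Next I would run a discrete intermediate-value argument. Let $n_0$ be the largest integer in $[3,N]$ with $h(n_0)>0$; this is well defined because $h(3)>0$, and $n_0<N$ because $h(N)<0$. By maximality of $n_0$ we have $n_0+1\le N$ and $h(n_0+1)\le 0$. If $n_0+1$ were prime we would get $h(n_0+1)=h(n_0)+(m-1)>h(n_0)>0$, contradicting $h(n_0+1)\le 0$; hence $n_0+1$ is composite and $h(n_0+1)=h(n_0)-1$. Since $h$ is integer-valued with $h(n_0)\ge 1$ and $h(n_0+1)\le 0$, we are forced to have $h(n_0)=1$ and $h(n_0+1)=0$. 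Setting $n=n_0+1$ then gives $m\pi(n)=n$, i.e.\ $d(n)=n/\pi(n)=m$, which is exactly what we wanted. As $m\ge 2$ was arbitrary, $d$ takes on every integer value greater than $1$. (One checks the argument also handles tiny cases gracefully: for $m=2$ it already returns $n=4$, since $h(3)=1$ and $4$ is composite with $h(4)=0$.)

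The only step that is not pure bookkeeping is the input that $h(n)$ eventually becomes negative, i.e.\ that $\pi(n)<n/m$ for some $n$. This is where I expect any real work to sit, but it is standard: it follows immediately from the prime number theorem, or from any Chebyshev-type bound of the form $\pi(n)\le Cn/\log n$, or from the bound $\pi(x)<x/(\log x-1.5)$ for $x\ge 67$ quoted later in the paper, all of which give $\pi(n)/n\to 0$. If one prefers a self-contained route, a short sieve suffices: the number of integers $\le n$ not divisible by any of the first $k$ primes is at most $n\prod_{i=1}^{k}(1-1/p_i)+2^{k}$, so $\pi(n)\le k+n\prod_{i=1}^{k}(1-1/p_i)+2^{k}$, and choosing $k$ with $\prod_{i=1}^{k}(1-1/p_i)<1/(2m)$ makes this less than $n/m$ for all sufficiently large $n$. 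Everything else—the monotonicity/jump structure of $h$ and the ``no skipping $0$ on the way down'' observation—is elementary.
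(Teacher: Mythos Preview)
Your argument is correct. The discrete intermediate-value idea---tracking $h(n)=m\pi(n)-n$, noting that it jumps up by $m-1$ at primes and down by exactly $1$ at non-primes, and concluding that it cannot pass from positive to negative without hitting $0$---is exactly Golomb's original 1962 proof. The only external input, that $\pi(n)/n\to 0$, you justify adequately.

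That said, there is nothing in the paper to compare against: the paper does not prove this statement at all. Theorem~\ref{theorem0} is simply quoted from \cite{Golomb} with a terminal \qed\ and no argument; the author uses it as a black box (to guarantee that $n/\pi(n)$ is an integer infinitely often) in the proofs of Theorems~\ref{theorem3} and~\ref{theorem10}. So your write-up is not an alternative to the paper's proof but rather a self-contained substitute for the citation, and it happens to reproduce Golomb's own reasoning.
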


\begin{Theorem}[\cite{Dusart}]\label{theorem1}
If $n$ is an integer $\ge 60184$, then
\begin{equation*}
\frac{n}{\log n-1}<\pi(n)<\frac{n}{\log n-1.1}\text{.}\xqedhere{4.01cm}{\qed}
\end{equation*}
\end{Theorem}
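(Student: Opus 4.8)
The plan is to derive both inequalities from explicit estimates for Chebyshev's function $\theta(x)=\sum_{p\le x}\log p$ together with partial summation. I would start from a bound of the form
\begin{equation*}
|\theta(x)-x|<\varepsilon(x)\,x\qquad(x\ge x_{0})\text{,}
\end{equation*}
where $\varepsilon(x)$ is an explicit and eventually decreasing function. One way to obtain such a bound is to feed a numerically verified zero-free region for the Riemann zeta function --- supplemented by a verification of the Riemann Hypothesis up to some height $T$ --- through the explicit formula for $\psi(x)$, and then pass from $\psi$ to $\theta$ by bounding the contribution of prime powers; alternatively one quotes such an estimate directly from the literature. The strength of $\varepsilon$ is what ultimately dictates how small the threshold $60184$ can be made, so this is the crucial input.

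The second step converts the $\theta$-bound into a $\pi$-bound by Abel (partial) summation. Writing $1=\log p\cdot(\log p)^{-1}$ and summing over primes $p\le x$ yields
\begin{equation*}
\pi(x)=\frac{\theta(x)}{\log x}+\int_{2}^{x}\frac{\theta(t)}{t\log^{2}t}\,dt\text{.}
\end{equation*}
Inserting $t-\varepsilon(t)t<\theta(t)<t+\varepsilon(t)t$ and integrating term by term produces, with fully explicit remainder terms rather than $O$-notation, an expansion of the shape
\begin{equation*}
\pi(x)=\frac{x}{\log x}+\frac{x}{\log^{2}x}+\frac{2x}{\log^{3}x}+\cdots\text{.}
\end{equation*}
To keep the constants honest one splits the integral at a convenient point (say $\sqrt{x}$, or a fixed large constant) so that the range where $\varepsilon$ is still large contributes only a controlled lower-order term.

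The final comparison is elementary. Expanding the two target functions as geometric series in $L=\log x$ gives $\tfrac{x}{L-1}=\tfrac{x}{L}+\tfrac{x}{L^{2}}+\tfrac{x}{L^{3}}+\cdots$ and $\tfrac{x}{L-1.1}=\tfrac{x}{L}+\tfrac{1.1x}{L^{2}}+\tfrac{1.21x}{L^{3}}+\cdots$. The first two terms of both series agree with those in the expansion of $\pi(x)$, so the lower inequality comes down to the fact that the genuine coefficient $2$ of the $x/L^{3}$ term in $\pi(x)$ exceeds the coefficient $1$ of the corresponding term in $x/(L-1)$, while the upper inequality comes down to the fact that the coefficient $0.1$ of the $x/L^{2}$ term in $x/(L-1.1)-\pi(x)$ outweighs the negative $x/L^{3}$ contribution. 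Making both statements quantitative gives a threshold beyond which the inequalities hold, and for the remaining values of $n$ down to $60184$ one checks $\pi(n)(\log n-1)<n<\pi(n)(\log n-1.1)$ by direct computation of $\pi(n)$. The main obstacle is unquestionably the first step: obtaining a $\theta$-estimate sharp enough that the analytic argument already applies near $n=60184$, which is exactly where the heavy machinery --- explicit zero-free regions, large-scale verification of the Riemann Hypothesis, and meticulous bookkeeping in the explicit formula --- is required; everything downstream is routine, though computationally laborious.
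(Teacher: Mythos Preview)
The paper does not prove this statement at all: Theorem~\ref{theorem1} is quoted verbatim from Dusart's preprint and is marked with a terminal $\blacksquare$ immediately after the display, with only a one-line remark afterwards explaining why the non-strict inequalities in Dusart's original can be replaced by strict ones (namely, $\log n$ is irrational for integer $n>1$). So there is nothing in the paper to compare your proposal against; the author treats the bound as a black box.

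Your sketch is a faithful outline of how results of this type --- and Dusart's in particular --- are actually obtained: explicit bounds $|\theta(x)-x|<\varepsilon(x)x$ coming from zero-free regions and finite RH verification, passage to $\pi$ by the Abel summation identity $\pi(x)=\theta(x)/\log x+\int_{2}^{x}\theta(t)\,dt/(t\log^{2}t)$, and a comparison of the resulting explicit expansion with the geometric-series expansions of $x/(L-1)$ and $x/(L-1.1)$. One small slip: in your final numerical check you wrote $\pi(n)(\log n-1)<n<\pi(n)(\log n-1.1)$, but multiplying the stated inequalities through correctly gives $\pi(n)(\log n-1.1)<n<\pi(n)(\log n-1)$; the two bounds are interchanged. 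Apart from that, the plan is sound, though of course the substance lies entirely in the quantitative $\theta$-estimate, which you correctly identify as the hard step and which the present paper simply imports.
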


\begin{Remark}
Dusart's paper states that for $x\ge 60184$ we have $x/(\log x-1)\le \pi(x)\le x/(\log x-1.1)$, but since $\log n$ is always irrational when $n$ is an integer $>1$, we can state his theorem the way we did.\hfill$\blacktriangleleft$
\end{Remark}

\begin{Theorem}[\cite{Gaitanas}]\label{theorem2}
The formula
\begin{equation*}
\pi(n)=\frac{n}{\lfloor\log n-0.5\rfloor}
\end{equation*}
is valid for infinitely many positive integers $n$.\qed
\end{Theorem}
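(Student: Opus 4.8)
The plan is to derive Theorem~\ref{theorem2} from Golomb's Theorem~\ref{theorem0} together with an explicit two-sided bound on $\pi$, in the same spirit as the divisibility results announced in the abstract. The starting point is the observation that the formula $\pi(n)=n/\lfloor\log n-0.5\rfloor$ holds at an integer $n$ precisely when (i) $n/\pi(n)$ is an integer and (ii) that integer equals $\lfloor\log n-0.5\rfloor$. Golomb's theorem supplies (i) in the strongest possible form: \emph{every} integer $m\ge 2$ occurs as $n/\pi(n)$ for some $n$. So the task reduces to showing that whenever $n/\pi(n)=m$, the real number $\log n-0.5$ falls in the half-open interval $[m,m+1)$.

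Concretely, I would fix an integer $m$ and, by Theorem~\ref{theorem0}, pick an integer $n$ with $n/\pi(n)=m$, i.e.\ $\pi(n)=n/m$. For all sufficiently large $m$ such an $n$ is automatically large, since $n/\pi(n)$ is bounded on any initial segment of the integers, so every $n$ with $n/\pi(n)=m$ must lie beyond any prescribed threshold once $m$ is big enough. Hence the Rosser--Schoenfeld inequality $x/(\log x-0.5)<\pi(x)<x/(\log x-1.5)$, valid for $x\ge 67$ and recalled in the introduction, applies at $n$. Substituting $\pi(n)=n/m$, dividing through by $n$, and inverting the resulting inequalities between positive quantities gives $m+0.5<\log n<m+1.5$, hence $m<\log n-0.5<m+1$, and therefore $\lfloor\log n-0.5\rfloor=m=n/\pi(n)$. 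That is exactly the identity $\pi(n)=n/\lfloor\log n-0.5\rfloor$ at this value of $n$. (Alternatively, one may run the same computation with Dusart's Theorem~\ref{theorem1} in place of the Rosser--Schoenfeld bound: it yields $m+1<\log n<m+1.1$, so $\log n-0.5$ lies in $(m+0.5,\,m+0.6)$ and again has integer part $m$; the only cost is the larger threshold $n\ge 60184$.)

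It remains to observe that this construction produces infinitely many witnesses. Distinct values of $m$ give distinct values of $n$, since $m$ is recovered from $n$ by $m=n/\pi(n)$; as every sufficiently large integer $m$ yields at least one such $n$, the formula $\pi(n)=n/\lfloor\log n-0.5\rfloor$ holds for infinitely many positive integers $n$, which is Theorem~\ref{theorem2}.

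The main obstacle here is not analytic but bookkeeping: one must check that the particular $n$ returned by the (non-constructive) Theorem~\ref{theorem0} really lies in the range where the chosen $\pi$-bound is valid. This needs only a simple auxiliary estimate, for instance that $n/\pi(n)<10$ for every $n<60184$, or, with the weaker threshold, that $n/\pi(n)$ stays below $4$ for $n<67$; each is a finite verification against a table of $\pi$ (or follows from classical elementary bounds on $\pi$). Everything else is a one-line rearrangement of inequalities, and the strictness of the bounds makes the floor unambiguous, in line with the remark that $\log n$ is irrational for integer $n>1$.
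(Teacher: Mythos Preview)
Your proof is correct and follows exactly the approach the paper attributes to Gaitanas in the introduction: combine Golomb's Theorem~\ref{theorem0} with the Rosser--Schoenfeld bound $x/(\log x-0.5)<\pi(x)<x/(\log x-1.5)$ to pin $n/\pi(n)$ strictly between $\log n-1.5$ and $\log n-0.5$, forcing $\lfloor\log n-0.5\rfloor=n/\pi(n)$ whenever the latter is an integer. The paper does not supply its own proof of Theorem~\ref{theorem2} (it is cited as Gaitanas's result), but your argument is precisely the template the paper then uses to prove the analogous Theorem~\ref{theorem3}, with Dusart's Theorem~\ref{theorem1} playing the role of the Rosser--Schoenfeld inequality.
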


\section{Main results}

We are now ready to prove our main results:

\begin{Theorem}\label{theorem3}
The formula
\begin{equation*}
\pi(n)=\frac{n}{\lfloor\log n-1\rfloor}
\end{equation*}
holds for infinitely many positive integers $n$.\qed
\end{Theorem}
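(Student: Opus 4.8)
\emph{Proof proposal.} The plan is to combine Golomb's Theorem~\ref{theorem0} with Dusart's Theorem~\ref{theorem1}. By Theorem~\ref{theorem0}, the function $d(n)=n/\pi(n)$ attains every integer value greater than $1$; in particular, for every integer $k\ge 60184$ there is at least one positive integer $n$ with $n/\pi(n)=k$, i.e.\ with $\pi(n)=n/k$. Since $\pi(n)\ge 1$, any such $n$ satisfies $n=k\,\pi(n)\ge k\ge 60184$, so Dusart's bounds apply to it. Moreover, if $n_k$ denotes such an $n$ for the value $k$, the relation $d(n_k)=k$ forces the integers $n_k$ to be pairwise distinct as $k$ ranges over $\{60184,60185,\dots\}$, so this construction already produces infinitely many candidate values of $n$; it then remains only to identify $k$ with $\lfloor\log n-1\rfloor$ for each of them.

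For that, fix one such pair $(n,k)$ with $\pi(n)=n/k$ and $n\ge 60184$. Substituting into Theorem~\ref{theorem1} gives
\[
\frac{n}{\log n-1}<\frac{n}{k}<\frac{n}{\log n-1.1},
\]
and dividing through by $n$ and inverting these positive quantities yields $\log n-1.1<k<\log n-1$. Now I would invoke the fact recorded in the Remark after Theorem~\ref{theorem1}, namely that $\log n$ is irrational for $n>1$, so $\log n-1$ is not an integer: the inequality $k<\log n-1$ then gives $k\le\lfloor\log n-1\rfloor$, while $k>\log n-1.1>\log n-2$ gives $k\ge\lfloor\log n-1\rfloor$ (if we had $k\le\lfloor\log n-1\rfloor-1$, then $k\le\log n-2$, contradicting $k>\log n-1.1$). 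Hence $k=\lfloor\log n-1\rfloor$, and therefore $\pi(n)=n/k=n/\lfloor\log n-1\rfloor$.

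Combining the two paragraphs, for each integer $k\ge 60184$ we obtain a distinct integer $n\ge 60184$ satisfying $\pi(n)=n/\lfloor\log n-1\rfloor$, which exhibits infinitely many such $n$ and proves the theorem. There is no genuinely hard step here — everything is a short deduction from the two cited theorems — but the one point requiring a little care is the squeeze argument showing that the length-$0.1$ window $(\log n-1.1,\log n-1)$ supplied by Dusart's inequalities contains the integer $\lfloor\log n-1\rfloor$ and no other, together with the bookkeeping observation that letting $k\to\infty$ is precisely what guarantees the associated integers $n$ are large enough for Theorem~\ref{theorem1} to apply and are infinitely many in number.
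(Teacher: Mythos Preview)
Your proof is correct and follows essentially the same route as the paper: both combine Dusart's inequalities (Theorem~\ref{theorem1}) to squeeze $n/\pi(n)$ into the length-$0.1$ interval $(\log n-1.1,\log n-1)$, use irrationality of $\log n$ to identify the unique integer there as $\lfloor\log n-1\rfloor$, and invoke Golomb's Theorem~\ref{theorem0} to guarantee infinitely many $n$ with $n/\pi(n)$ an integer. Your version is slightly more explicit in ensuring $n\ge 60184$ (via $n\ge k\ge 60184$) and in checking that distinct $k$ give distinct $n$, but the substance is identical.
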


\begin{proof}
According to Theorem \ref{theorem1}, for $n\ge 60184$ we have
\begin{equation*}
\frac{n}{\log n-1}<\pi(n)<\frac{n}{\log n-1.1}\Rightarrow\frac{\log n-1.1}{n}<\frac{1}{\pi(n)}<\frac{\log n-1}{n}\text{.}
\end{equation*}
If we multiply by $n$, we get
\begin{equation}\label{equation1}
\log n-1.1<\frac{n}{\pi(n)}<\log n-1\text{.}
\end{equation}
Since $\log n-1.1$ and $\log n-1$ are both irrational (for $n>1$), inequality \eqref{equation1} implies that when $n/\pi(n)$ is an integer we must have
\begin{equation}\label{equation2}
\frac{n}{\pi(n)}=\lfloor\log n-1\rfloor=\lfloor\log n-1.1\rfloor+1=\lceil\log n-1.1\rceil=\lceil\log n-1\rceil-1\text{.} 
\end{equation}
Taking Theorem \ref{theorem1} and equality \eqref{equation2} into account, we can say that for every $n\ge 60184$ when $n/\pi(n)$ is an integer we must have
\begin{equation*}
\frac{n}{\pi(n)}=\lfloor\log n-1\rfloor\Rightarrow\pi(n)=\frac{n}{\lfloor\log n-1\rfloor}\text{.}
\end{equation*}
Since Theorem \ref{theorem0} implies that $n/\pi(n)$ is an integer infinitely often, it follows that there are infinitely many positive integers $n$ such that $\pi(n)=n/\lfloor\log n-1\rfloor$.
\end{proof}

In fact, the following theorem follows from Theorems \ref{theorem0}, from Gaitana's proof of Theorem \ref{theorem2}, and from the proof of Theorem \ref{theorem3}:

\begin{Theorem}\label{theorem4}
For every $n\ge 60184$ when $n/\pi(n)$ is an integer we must have
\begin{equation}\label{equation3}
\begin{gathered}
\frac{n}{\pi(n)}=\lceil\log n-1.5\rceil=\lfloor\log n-0.5\rfloor=\lfloor\log n-1\rfloor=\\=\lfloor\log n-1.1\rfloor+1=\lceil\log n-1.1\rceil=\lceil\log n-1\rceil-1\text{.}
\end{gathered}
\end{equation}
In other words, for $n\ge 60184$ when $n/\pi(n)$ is an integer we must have
\begin{equation*}
\begin{gathered}
\pi(n)=\frac{n}{\lceil\log n-1.5\rceil}=\frac{n}{\lfloor\log n-0.5\rfloor}=\frac{n}{\lfloor\log n-1\rfloor}=\frac{n}{\lfloor\log n-1.1\rfloor+1}=\\=\frac{n}{\lceil\log n-1.1\rceil}=\frac{n}{\lceil\log n-1\rceil-1}\text{.}\xqedhere{3.71cm}{\qed} 
\end{gathered}
\end{equation*}
\end{Theorem}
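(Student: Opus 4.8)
The plan is to rerun the short computation from the proof of Theorem~\ref{theorem3}, but this time feeding in \emph{two} pairs of bounds on $\pi(n)$: Dusart's inequalities from Theorem~\ref{theorem1}, and the Rosser--Schoenfeld inequalities $x/(\log x-0.5)<\pi(x)<x/(\log x-1.5)$ (valid for $x\ge 67$) on which Gaitanas's proof of Theorem~\ref{theorem2} rests. Since $60184\ge 67$, both pairs apply to every integer $n\ge 60184$.

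First I would extract the two resulting enclosures of $n/\pi(n)$. The proof of Theorem~\ref{theorem3} already yields, for $n\ge 60184$,
\begin{equation*}
\log n-1.1<\frac{n}{\pi(n)}<\log n-1\text{,}
\end{equation*}
and the same algebra applied to the Rosser--Schoenfeld bounds gives, for the same range of $n$,
\begin{equation*}
\log n-1.5<\frac{n}{\pi(n)}<\log n-0.5\text{.}
\end{equation*}
Because $\log n$ is irrational for every integer $n>1$, each of the four numbers $\log n-1.5$, $\log n-1.1$, $\log n-1$, $\log n-0.5$ is irrational.

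Now suppose $n\ge 60184$ and $m:=n/\pi(n)$ is an integer. From the first enclosure, exactly as in the derivation of \eqref{equation2}, one obtains $m=\lfloor\log n-1\rfloor=\lfloor\log n-1.1\rfloor+1=\lceil\log n-1.1\rceil=\lceil\log n-1\rceil-1$. For the two new expressions I would use the second enclosure: since $m>\log n-1.5$ with $\log n-1.5$ irrational, we get $m\ge\lceil\log n-1.5\rceil$; since $m<\log n-0.5$ with $\log n-0.5$ irrational, we get $m\le\lfloor\log n-0.5\rfloor$. The interval $(\log n-1.5,\log n-0.5)$ has length exactly $1$ with irrational endpoints, so it contains precisely one integer, and the identity $\lceil\log n-1.5\rceil=\lfloor\log n-1.5\rfloor+1=\lfloor\log n-0.5\rfloor$ (again using that $\log n$ is irrational) pins that integer down; hence $m=\lceil\log n-1.5\rceil=\lfloor\log n-0.5\rfloor$. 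Concatenating this with the four equalities above gives the chain \eqref{equation3}, and dividing $n$ by each term yields the stated list of formulas for $\pi(n)$.

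The step that takes the most care — though it is still routine — is the bookkeeping that upgrades the strict inequalities to the ceiling/floor identities: one has to track that all the relevant endpoints are irrational, so that $m>\alpha$ becomes $m\ge\lceil\alpha\rceil$ and $m<\beta$ becomes $m\le\lfloor\beta\rfloor$, and one has to notice that it is exactly the length-$1$ interval $(\log n-1.5,\log n-0.5)$ that forces $\lceil\log n-1.5\rceil=\lfloor\log n-0.5\rfloor$. There is no real obstacle here: Theorem~\ref{theorem4} is essentially just the merger of the information already contained in the proof of Theorem~\ref{theorem3} with the information in Gaitanas's proof of Theorem~\ref{theorem2}.
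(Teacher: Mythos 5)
Your proposal is correct and is essentially the paper's own argument: the paper gives no written proof of Theorem~\ref{theorem4}, stating only that it follows from Golomb's theorem, Gaitanas's proof of Theorem~\ref{theorem2} (i.e., the Rosser--Schoenfeld enclosure $\log n-1.5<n/\pi(n)<\log n-0.5$), and the proof of Theorem~\ref{theorem3} (the Dusart enclosure $\log n-1.1<n/\pi(n)<\log n-1$), which is exactly the merger you carry out. Your bookkeeping with the irrational endpoints and the length-$1$ interval is sound, so nothing further is needed.
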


\begin{Theorem}\label{theorem5}
Let $n$ be an integer $\ge 60184$. If $\operatorname{frac}(\log n)=\log n-\lfloor\log n\rfloor>0.1$, then $\pi(n)\nmid n$ (that is to say, $n/\pi(n)$ is not an integer).\qed
\end{Theorem}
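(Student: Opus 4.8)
The plan is to prove this by contraposition, using inequality \eqref{equation1} from the proof of Theorem \ref{theorem3} together with the elementary geometry of the interval it describes. Assume $n\ge 60184$ and suppose, contrary to the desired conclusion, that $\pi(n)\mid n$, so that $q:=n/\pi(n)$ is a positive integer. By \eqref{equation1} we then have $\log n-1.1<q<\log n-1$; that is, $q$ is an integer lying in the open interval $(\log n-1.1,\log n-1)$, an interval of length $0.1$ that therefore contains no integer other than $q$.

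From $q<\log n-1$ I obtain $q+1<\log n$, and from $q>\log n-1.1$ I obtain $\log n<q+1.1$. Combining these two bounds gives $q+1<\log n<q+1.1$. Since $q+1$ is an integer, this forces $\lfloor\log n\rfloor=q+1$, and hence $\operatorname{frac}(\log n)=\log n-\lfloor\log n\rfloor=\log n-(q+1)<0.1$. Thus I will have shown that for every integer $n\ge 60184$, if $\pi(n)\mid n$ then $\operatorname{frac}(\log n)<0.1$; taking the contrapositive yields exactly the statement of the theorem, namely that $\operatorname{frac}(\log n)>0.1$ implies $\pi(n)\nmid n$.

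I do not expect a genuine obstacle here: once \eqref{equation1} is in hand, the argument is a short chain of inequalities. The only point that needs a little care is that the inequalities in \eqref{equation1} are strict, which is legitimate because $\log n$ is irrational for every integer $n>1$ (as recorded in the Remark following Theorem \ref{theorem1}), so that none of $\log n-1$, $\log n-1.1$, $\operatorname{frac}(\log n)-0.1$ can be zero; this is what lets the conclusion be stated with the strict inequality $\operatorname{frac}(\log n)>0.1$ (indeed the argument even gives $\pi(n)\nmid n$ whenever $\operatorname{frac}(\log n)\ge 0.1$). It is worth noting that the same reasoning yields a slightly sharper statement, recovered via \eqref{equation2}: when $\pi(n)\mid n$ we not only have $\operatorname{frac}(\log n)<0.1$ but also $n/\pi(n)=\lfloor\log n-1\rfloor=\lfloor\log n\rfloor-1$, a refinement that the remaining results of the paper rely on.
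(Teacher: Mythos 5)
Your proof is correct and follows essentially the same route as the paper: both arguments take the contrapositive and use the Dusart-derived sandwich $\log n-1.1<n/\pi(n)<\log n-1$ (inequality \eqref{equation1}) to force $\operatorname{frac}(\log n)$ below $0.1$ when $n/\pi(n)$ is an integer. Your version is in fact a bit cleaner, since you work directly with the integer $q=n/\pi(n)$ trapped in an interval of length $0.1$ and read off $\lfloor\log n\rfloor=q+1$ immediately, whereas the paper routes the same fact through the floor/ceiling identities of Theorem \ref{theorem4}.
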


\begin{proof}
According to Theorem \ref{theorem4}, if $n\ge 60184$ and $n/\pi(n)$ is an integer, then
\begin{align*}
\frac{n}{\pi(n)}=\lfloor\log n-1\rfloor&=\lceil\log n-1.1\rceil\text{.}
\intertext{In other words, for $n\ge 60184$ when $n/\pi(n)$ is an integer we have}
\lfloor\log n-1\rfloor&=\lceil\log n-1.1\rceil\\
\lfloor\log n-1\rfloor&=\lceil\log n-1-0.1\rceil\\
\operatorname{frac}(\log n-1)&\le 0.1\\
\log n-1-\lfloor\log n-1\rfloor&\le 0.1\\
\log n-\lfloor\log n-1\rfloor&\le 1.1\\
\operatorname{frac}(\log n)&\le 0.1\\
\log n-\lfloor\log n\rfloor&\le 0.1\text{.}
\end{align*}
Suppose that $P$ is the statement `$n/\pi(n)$ is an integer' and $Q$ is the statement `$\log n-\lfloor\log n\rfloor\le 0.1$'. According to propositional logic, the fact that $P\rightarrow Q$ implies that $\neg Q\rightarrow\neg P$.
\end{proof}

Similar theorems can be proved by using Theorem \ref{theorem4} and equality \eqref{equation3}.

\begin{Remark}
We can also say that if $n\ge 60184$ and
\begin{equation*}
n>e^{0.1+\lfloor\log n\rfloor}\text{,}
\end{equation*}
then $\pi(n)\nmid n$.\hfill$\blacktriangleleft$
\end{Remark}

\begin{Remark}
Because $\log n$ is irrational for $n>1$, another way of stating Theorem \ref{theorem5} is by saying that if $n\ge 60184$ and the first digit to the right of the decimal point of $\log n$ is 1, 2, 3, 4, 5, 6, 7, 8, or 9, then $\pi(n)\nmid n$. Example:
\begin{equation*}
\log 10^{31}=71.\textcolor{red}{3}8...
\end{equation*}
The first digit after the decimal point of $\log 10^{31}$ (in red) is 3. This implies that $\pi(10^{31})$ does not divide $10^{31}$. We can also say that if $n\ge 60184$ and $\pi(n)$ divides $n$, then the first digit after the decimal point of $\log n$ can only be 0.

Now, if $y$ is a positive noninteger, then the first digit after the decimal point of $y$ is equal to $\lfloor 10\operatorname{frac}(y)\rfloor=\left\lfloor 10y-10\lfloor y\rfloor\right\rfloor$. So, we can say that if $n\ge 60184$ and $\left\lfloor 10\log n-10\lfloor\log n\rfloor\right\rfloor\ne 0$, then $\pi(n)\nmid n$. On the other hand, if $n\ge 60184$ and $\pi(n)$ divides $n$, then $\left\lfloor 10\log n-10\lfloor\log n\rfloor\right\rfloor= 0$.\hfill$\blacktriangleleft$
\end{Remark}

The following theorem follows from Theorem \ref{theorem5}:

\begin{Theorem}\label{theorem6}
Let $e$ be the base of the natural logarithm. If $a$ is any integer $\ge 11$ and $n$ is any integer contained in the interval $[e^{a+0.1},e^{a+1}]$, then $\pi(n)\nmid n$. (The number $e^r$ is irrational when $r$ is a rational number $\ne 0$.)\qed
\end{Theorem}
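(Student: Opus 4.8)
The plan is to reduce Theorem~\ref{theorem6} to Theorem~\ref{theorem5} by checking two things for every integer $n$ lying in an interval $[e^{a+0.1},e^{a+1}]$ with $a\ge 11$: that $n\ge 60184$, and that $\operatorname{frac}(\log n)>0.1$. Once both are in hand, Theorem~\ref{theorem5} gives $\pi(n)\nmid n$ at once.

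First I would dispose of the size requirement. Since $x\mapsto e^{x+0.1}$ is increasing, the smallest value $n$ can take over all admissible $a$ is bounded below by $e^{11.1}$, and a direct numerical estimate gives $e^{11.1}>60184$ (indeed $e^{11.1}\approx 66171$). Hence every integer $n$ in any interval $[e^{a+0.1},e^{a+1}]$ with $a\ge 11$ satisfies $n\ge 60184$, which is exactly the hypothesis Theorem~\ref{theorem5} needs.

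Next I would pin down $\lfloor\log n\rfloor$. Because $n$ is an integer greater than $1$, the number $\log n$ is irrational; and because $a+0.1$ and $a+1$ are nonzero rationals, both $e^{a+0.1}$ and $e^{a+1}$ are irrational, so the integer $n$ cannot equal either endpoint. Therefore $e^{a+0.1}<n<e^{a+1}$, i.e.\ $a+0.1<\log n<a+1$, which forces $\lfloor\log n\rfloor=a$ and hence $\operatorname{frac}(\log n)=\log n-\lfloor\log n\rfloor=\log n-a>0.1$. Applying Theorem~\ref{theorem5} then yields $\pi(n)\nmid n$.

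I do not expect a genuine obstacle. The only mildly delicate points are the numerical check that $e^{11.1}>60184$ (routine) and the use of the irrationality of $e^{a+0.1}$ and $e^{a+1}$ to pass from the closed interval to the open one, so that $\lfloor\log n\rfloor$ is exactly $a$; this is precisely the fact highlighted by the parenthetical remark in the statement, so it is meant to be invoked rather than proved here.
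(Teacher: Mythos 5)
Your proposal is correct and follows exactly the route the paper intends: the paper simply asserts that Theorem~\ref{theorem6} ``follows from Theorem~\ref{theorem5}'', and you supply the two details that make this work (the numerical check $e^{11.1}>60184$ and the use of irrationality of the endpoints to force $\lfloor\log n\rfloor=a$ and $\operatorname{frac}(\log n)>0.1$). Nothing further is needed.
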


\begin{Example}
Take $a=18$. If $n$ is any integer in the interval $[e^{18.1},e^{19}]$, then $\pi(n)\nmid n$.\hfill$\blacktriangleleft$
\end{Example}

\begin{Corollary}
If $a$ is any positive integer $>1$, then $\pi(\lfloor e^a\rfloor)\nmid \lfloor e^a\rfloor$.\qed
\end{Corollary}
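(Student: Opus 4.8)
The plan is to separate the range $a\ge 12$, where the corollary drops out of Theorem~\ref{theorem6} after a one-line estimate, from the finitely many small values $a\in\{2,3,\dots,11\}$, which I would verify directly.

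For $a\ge 12$ I would fit $\lfloor e^a\rfloor$ into an interval of the shape occurring in Theorem~\ref{theorem6}. Writing $b=a-1$, that interval is $[e^{b+0.1},e^{b+1}]=[e^{a-0.9},e^a]$, and $b=a-1\ge 11$, so Theorem~\ref{theorem6} applies with this $b$. The upper inclusion is immediate: $\lfloor e^a\rfloor<e^a$, since $e^a$ is irrational for integer $a\ge 1$. For the lower inclusion I would use $\lfloor e^a\rfloor>e^a-1$ and reduce the desired inequality $e^a-1\ge e^{a-0.9}$ to $e^a\ge 1/(1-e^{-0.9})$, a quantity less than $2$; this holds for every integer $a\ge 1$, in particular for $a\ge 12$. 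Hence $\lfloor e^a\rfloor\in[e^{(a-1)+0.1},e^{(a-1)+1}]$, and Theorem~\ref{theorem6} gives $\pi(\lfloor e^a\rfloor)\nmid\lfloor e^a\rfloor$. Equivalently one could invoke Theorem~\ref{theorem5}: for $a\ge 12$ we have $\lfloor e^a\rfloor\ge\lfloor e^{12}\rfloor=162754\ge 60184$, and $\operatorname{frac}(\log\lfloor e^a\rfloor)=\log\lfloor e^a\rfloor-(a-1)>1+\log(1-e^{-a})>0.1$, so Theorem~\ref{theorem5} applies directly.

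For $a\in\{2,3,\dots,11\}$ the numbers $\lfloor e^a\rfloor$ are $7,20,54,148,403,1096,2980,8103,22026,59874$; for each such $n$ I would read $\pi(n)$ off a table of the prime-counting function and check $\pi(n)\nmid n$ (for instance $\pi(7)=4\nmid 7$, $\pi(20)=8\nmid 20$, and so on, with $n=59874$ the delicate one, where $\pi(n)$ is near $6050$ whereas $59874=2\cdot 3\cdot 17\cdot 587$ has no divisor near that value). Since $\lfloor e^1\rfloor=2$ and $\pi(2)=1\mid 2$, the hypothesis $a>1$ cannot be dropped, which is why the statement excludes $a=1$.

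The only genuine work is computational and confined to the small range: one must get $\pi$ right at the three largest values $n=8103,\,22026,\,59874$, which lie just below Dusart's threshold $60184$ and so escape the interval argument. Everything with $a\ge 12$ is a routine consequence of Theorem~\ref{theorem6} (or Theorem~\ref{theorem5}) together with the elementary bound $e^a-1\ge e^{a-0.9}$.
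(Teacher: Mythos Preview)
Your proposal is correct and follows essentially the same approach as the paper: split at $a=12$, invoke Theorem~\ref{theorem6} for $a\ge 12$, and verify $2\le a\le 11$ by direct computation. You are simply more explicit than the paper about why $\lfloor e^a\rfloor$ lands in the interval $[e^{(a-1)+0.1},e^{(a-1)+1}]$; the paper leaves this step implicit and presents the small cases via a table of the ratios $\lfloor e^a\rfloor/\pi(\lfloor e^a\rfloor)$ rather than the factorization you sketch.
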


\begin{proof}
For $a\ge 12$ the proof follows from Theorem \ref{theorem6}. On the other hand, $\lfloor e^a\rfloor/\pi(\lfloor e^a\rfloor)$ is not an integer whenever $2\le a\le 11$, as shown in the following table:

\begin{longtable}{|c|c|}
\hline
   $a$ & $\lfloor e^a\rfloor/\pi(\lfloor e^a\rfloor)$  \\
\hline \hline
\endhead
\multicolumn{2}{c}{Continued on next page...}
\endfoot
\endlastfoot
\hline
   {\bf 1} & 2\textcolor{white}{.00...}\\
\hline
   {\bf 2} & 1.75\textcolor{white}{...}\\
\hline
   {\bf 3} & 2.5\textcolor{white}{0...}\\
\hline
   {\bf 4} & 3.37...\\
\hline
   {\bf 5} & 4.35...\\
\hline
{\bf 6} & 5.10...\\
\hline
{\bf 7} & 5.98...\\
\hline
{\bf 8} & 6.94...\\
\hline
{\bf 9} & 7.95...\\
\hline
{\bf 10} & 8.93...\\
\hline
{\bf 11} & 9.89...\\
\hline
\end{longtable}
In other words, if $a\in\mathbb{Z}^+$, then $\pi(\lfloor e^a\rfloor)\mid\lfloor e^a\rfloor$ only when $a=1$.
\end{proof}

\begin{Theorem}\label{theorem8}
Let $n$ be an integer $\ge 60184$ and let
\begin{equation*}
f(n)=\left|\left\lfloor\log n-\lfloor\log n\rfloor-0.1\right\rfloor\right|\left\lfloor\frac{\left\lfloor n/\lfloor\log n-1\rfloor\right\rfloor\lfloor\log n-1\rfloor}{n}\right\rfloor\text{.}
\end{equation*}
If $f(n)=0$, then $\pi(n)\nmid n$. On the other hand, if $\pi(n)\mid n$, then $f(n)=1$.\qed
\end{Theorem}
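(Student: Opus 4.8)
The plan is to recognize that $f(n)$ is a product of two factors, each of which is always either $0$ or $1$, and then to pin down exactly when each factor equals $1$. Write $m:=\lfloor\log n-1\rfloor$; for every $n\ge 60184$ this $m$ is a positive integer (indeed $m\ge 10$). Set $A(n):=|\lfloor\log n-\lfloor\log n\rfloor-0.1\rfloor|$ and $B(n):=\lfloor\lfloor n/m\rfloor m/n\rfloor$. First I would show $A(n)\in\{0,1\}$: since $\log n$ is irrational for integer $n>1$, we have $\operatorname{frac}(\log n)=\log n-\lfloor\log n\rfloor\in(0,1)$, hence $\operatorname{frac}(\log n)-0.1\in(-0.1,0.9)$, so its floor lies in $\{-1,0\}$; moreover $A(n)=1$ precisely when $\operatorname{frac}(\log n)<0.1$ and $A(n)=0$ precisely when $\operatorname{frac}(\log n)>0.1$. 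Next I would show $B(n)\in\{0,1\}$: because $m<n$ we have $n/m>1$, so $\lfloor n/m\rfloor\ge 1$ and $\lfloor n/m\rfloor m\ge m>0$, while $\lfloor n/m\rfloor m\le n$; thus $\lfloor n/m\rfloor m/n\in(0,1]$, and $B(n)=1$ exactly when $\lfloor n/m\rfloor m=n$, i.e.\ exactly when $m\mid n$, and $B(n)=0$ otherwise. Combining, $f(n)=A(n)B(n)\in\{0,1\}$, and $f(n)=1$ if and only if both $\operatorname{frac}(\log n)<0.1$ and $\lfloor\log n-1\rfloor\mid n$.

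The core step is then the single implication: if $\pi(n)\mid n$, then $f(n)=1$. Assuming $n\ge 60184$ and $\pi(n)\mid n$, Theorem~\ref{theorem5} (in contrapositive form) gives $\operatorname{frac}(\log n)\le 0.1$; and $\operatorname{frac}(\log n)=0.1$ is impossible, since it would force $n=e^{k+0.1}$ for some integer $k$, contradicting the irrationality of $e^r$ for rational $r\ne 0$. Hence $\operatorname{frac}(\log n)<0.1$, so $A(n)=1$. Since $n/\pi(n)$ is an integer, Theorem~\ref{theorem4} (equivalently, the proof of Theorem~\ref{theorem3}) gives $n/\pi(n)=\lfloor\log n-1\rfloor=m$, so $n=m\,\pi(n)$ and therefore $m\mid n$, which yields $B(n)=1$. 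Thus $f(n)=1$.

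To finish: the implication just established is exactly the second assertion of the theorem. For the first assertion, if $f(n)=0$ then $f(n)\ne 1$ (as $f(n)\in\{0,1\}$), so the contrapositive of that implication gives $\pi(n)\nmid n$.

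I do not expect a substantive obstacle here: the theorem is essentially a compact encoding of Theorems~\ref{theorem5} and~\ref{theorem3}/\ref{theorem4} through nested floor functions, so the only real work is the range analysis of $A(n)$ and $B(n)$. The points that need care are (i) confirming $\operatorname{frac}(\log n)\in(0,1)$ and $\lfloor n/m\rfloor\ge 1$, so that neither factor can leave $\{0,1\}$; and (ii) invoking the irrationality of $\log n$, and of $e^r$ for rational $r\ne 0$, to rule out the borderline value $\operatorname{frac}(\log n)=0.1$ — the one value that Theorem~\ref{theorem5} does not by itself exclude but which would make $A(n)=0$.
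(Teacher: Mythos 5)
Your proposal is correct and follows essentially the same route as the paper: the same factorization of $f(n)$ into the two $\{0,1\}$-valued factors, the same range analysis showing each factor is $0$ or $1$, and the same appeal to Theorems \ref{theorem4} and \ref{theorem5} to tie each factor's vanishing to $\pi(n)\nmid n$. The only cosmetic difference is that you prove the implication $\pi(n)\mid n\Rightarrow f(n)=1$ first and obtain the other assertion by contraposition, whereas the paper argues in the reverse order; your explicit exclusion of the borderline case $\operatorname{frac}(\log n)=0.1$ via irrationality is a welcome bit of extra care.
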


\begin{proof}

~\vspace{\topsep}

$\bullet$ {\bf Part 1}

\vspace{\topsep}

\noindent Suppose that
\begin{equation*}
f(n)=g(n)h(n)\text{,}
\end{equation*}
where
\begin{equation*}
g(n)=\left|\left\lfloor\log n-\lfloor\log n\rfloor-0.1\right\rfloor\right|
\end{equation*}
and
\begin{equation*}
h(n)=\left\lfloor\frac{\left\lfloor n/\lfloor\log n-1\rfloor\right\rfloor\lfloor\log n-1\rfloor}{n}\right\rfloor\text{.}
\end{equation*}

To begin with, if $n\ge 60184$, then $\log n-\lfloor\log n\rfloor$ can never be equal to 0.1. Now, when $\log n-\lfloor\log n\rfloor<0.1$ we have $-1<\log n-\lfloor\log n\rfloor-0.1<0$ and hence $\left|\left\lfloor\log n-\lfloor\log n\rfloor-0.1\right\rfloor\right|=1$. On the other hand, when $\log n-\lfloor\log n\rfloor>0.1$ we have $0<\log n-\lfloor\log n\rfloor-0.1<1$ and hence $\left|\left\lfloor\log n-\lfloor\log n\rfloor-0.1\right\rfloor\right|=0$. This means that if $n$ is any integer $\ge 60184$, then $g(n)$ equals either 0 or 1. We can also say that if $n\ge 60184$ and $g(n)=0$, then $\log n-\lfloor\log n\rfloor>0.1$, which implies that $\pi(n)\nmid n$ (according to Theorem \ref{theorem5}). (This means that if $n\ge 60184$ and $\pi(n)\mid n$, then $g(n)=1$.)

\vspace{\topsep}

$\bullet$ {\bf Part 2}

\vspace{\topsep}

\noindent If $n\ge 60184$, then
\begin{equation*}
\left\lfloor\frac{n}{\lfloor\log n-1\rfloor}\right\rfloor\le\frac{n}{\lfloor\log n-1\rfloor}\text{,}
\end{equation*}
which means that
\begin{equation*}
\left\lfloor\left\lfloor\frac{n}{\lfloor\log n-1\rfloor}\right\rfloor/\frac{n}{\lfloor\log n-1\rfloor}\right\rfloor=\left\lfloor\frac{\left\lfloor n/\lfloor\log n-1\rfloor\right\rfloor\lfloor\log n-1\rfloor}{n}\right\rfloor=h(n)
\end{equation*}
equals either 0 or 1. If $h(n)=0$, then $n$ is not divisible by $\lfloor\log n-1\rfloor$, which implies that $\pi(n)\nmid n$ (according to Theorem \ref{theorem4}). In other words, if $n\ge 60184$ and $h(n)=0$, then $\pi(n)\nmid n$. (This means that if $n\ge 60184$ and $\pi(n)\mid n$, then $h(n)=1$.)

\vspace{\topsep}

$\bullet$ {\bf Part 3}

\vspace{\topsep}

\noindent There are two possible outputs for $g(n)$ (0 or 1) as well as two possible outputs for $h(n)$ (0 or 1). This means that for $n\ge 60184$ we have either
\begin{align*}
g(n)h(n)=0\cdot 0=0\text{,}\\
\shortintertext{or}
g(n)h(n)=0\cdot 1=0\text{,}\\
\shortintertext{or}
g(n)h(n)=1\cdot 0=0\text{,}\\
\shortintertext{or}
g(n)h(n)=1\cdot 1=1\text{.}
\end{align*}

If $f(n)=g(n)h(n)=0$, then at least one of the factors $g(n)$ and $h(n)$ equals 0, which implies that $\pi(n)\nmid n$ (see Part 1 and Part 2). This means that if $n\ge 60184$ and $f(n)=0$, then $\pi(n)\nmid n$. Consequently, if $n\ge 60184$ and $\pi(n)\mid n$, then $f(n)=1$.
\end{proof}

\begin{Theorem}\label{theorem7}
If $n\ge 60184$ and $n/\pi(n)$ is an integer, then $n$ is a multiple of $\lfloor\log n-1\rfloor$ located in the interval $[e^{\lfloor\log n-1\rfloor+1},e^{\lfloor\log n-1\rfloor+1.1}]$.\qed
\end{Theorem}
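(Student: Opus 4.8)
The plan is to read off everything from inequality \eqref{equation1} in the proof of Theorem \ref{theorem3} together with the chain of equalities in Theorem \ref{theorem4}. First I would invoke Theorem \ref{theorem4}: since $n\ge 60184$ and $n/\pi(n)$ is an integer, we have $n/\pi(n)=\lfloor\log n-1\rfloor$, hence $\pi(n)=n/\lfloor\log n-1\rfloor$ and in particular $\lfloor\log n-1\rfloor\mid n$. This already establishes that $n$ is a multiple of $\lfloor\log n-1\rfloor$. Write $m=\lfloor\log n-1\rfloor$ for brevity, and note that $m\ge 10$ because $n\ge 60184$ forces $\log n>11$, so $m+1$ and $m+1.1$ are nonzero rationals.

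Next I would locate $n$ inside the interval. Combining \eqref{equation1} with $n/\pi(n)=m$ yields the two-sided bound $\log n-1.1<m<\log n-1$. The right-hand inequality $m<\log n-1$ rearranges to $\log n>m+1$, i.e.\ $n>e^{m+1}$. The left-hand inequality $\log n-1.1<m$ rearranges to $\log n<m+1.1$, i.e.\ $n<e^{m+1.1}$. Hence $e^{m+1}<n<e^{m+1.1}$, which in particular means that $n$ lies in the closed interval $[e^{m+1},e^{m+1.1}]=[e^{\lfloor\log n-1\rfloor+1},e^{\lfloor\log n-1\rfloor+1.1}]$, as claimed. The strictness of the two bounds is in fact automatic, since $e^{m+1}$ and $e^{m+1.1}$ are irrational (as $e^r$ is irrational for rational $r\ne 0$) and therefore cannot equal the integer $n$; but even the closed interval in the statement is all that is needed.

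I do not expect a genuine obstacle here: the theorem is essentially a repackaging of \eqref{equation1} together with \eqref{equation2}. The only point requiring care is that it is the exact identity $n/\pi(n)=\lfloor\log n-1\rfloor$ (rather than merely the inclusion $\log n-1.1<n/\pi(n)<\log n-1$) that simultaneously delivers the divisibility by $\lfloor\log n-1\rfloor$ and pins down the precise exponents $m+1$ and $m+1.1$ of the interval endpoints. That identity in turn rests on the irrationality of $\log n-1$ and $\log n-1.1$, which is already recorded in the proof of Theorem \ref{theorem3}. Thus the write-up reduces to the two short rearrangements above plus a citation of Theorem \ref{theorem4}.
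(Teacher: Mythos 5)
Your proposal is correct and follows essentially the same route as the paper: both obtain the divisibility from the identity $n/\pi(n)=\lfloor\log n-1\rfloor$ of Theorem \ref{theorem4}, and both pin down the interval from Dusart's double inequality. Your version is marginally more direct in the second step, substituting $m=n/\pi(n)$ straight into $\log n-1.1<n/\pi(n)<\log n-1$ and rearranging, whereas the paper first passes through $\operatorname{frac}(\log n)\le 0.1$ to place $n$ in $[e^k,e^{k+0.1}]$ and then identifies $k=\lfloor\log n-1\rfloor+1$; the two computations are equivalent.
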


\begin{proof}\let\qed\relax
According to Theorems \ref{theorem4} and \ref{theorem5}, if $n\ge 60184$ and $n/\pi(n)$ is an integer, then
\begin{equation*}
\frac{n}{\pi(n)}=\lfloor\log n-1\rfloor\Rightarrow n=\pi(n)\lfloor\log n-1\rfloor
\end{equation*}
and
\begin{equation*}
\text{frac}(\log n)=\log n-\lfloor\log n\rfloor\le 0.1\text{.}
\end{equation*}
The fact that frac$(\log n)\le 0.1$ implies that $n$ is located in the interval
\begin{equation*}
[e^k,e^{k+0.1}]
\end{equation*}
for some positive integer $k$. In other words, we have
\begin{equation*}
e^k<n<e^{k+0.1}\Rightarrow k<\log n<k+0.1\Rightarrow k-1<\log n-1<k-0.9\text{,}
\end{equation*}
which means that
\begin{align*}
k-1&=\lfloor\log n-1\rfloor\\
k&=\lfloor\log n-1\rfloor+1\text{.}\xqedhere{4.71cm}{\blacksquare}
\end{align*}
\end{proof}

\begin{Remark}\label{theorem9}
Suppose that $b$ is any fixed integer $\ge 12$. Theorem \ref{theorem7} implies that if $n$ is an integer in the interval $[e^b,e^{b+0.1}]$ and at the same time $n$ is not a multiple of $b-1$, then $\pi(n)\nmid n$. This means that if $n\ge 60184$ and $\pi(n)$ divides $n$, then $n$ is located in the interval $[e^b,e^{b+0.1}]$ for some positive integer $b$ and $n$ is a multiple of $b-1$.\hfill$\blacktriangleleft$
\end{Remark}

The following theorem follows from Theorems \ref{theorem0} and \ref{theorem7} and from the fact that $n/\pi(n)<11$ for $n\le 60183$ (this fact can be checked using software):

\begin{Theorem}\label{theorem10}
Let $c$ be any fixed integer $\ge 12$. In the interval $[e^c,e^{c+0.1}]$ there is always an integer $n$ such that $\pi(n)$ divides $n$. In other words, in the interval $[e^c,e^{c+0.1}]$ there is always an integer $n$ such that $\pi(n)=n/(c-1)$.\qed
\end{Theorem}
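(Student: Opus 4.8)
The plan is to feed Golomb's theorem into Theorem~\ref{theorem7}. Fix an integer $c\ge 12$ and put $m=c-1$, so that $m\ge 11>1$. By Theorem~\ref{theorem0} the function $d(n)=n/\pi(n)$ takes the value $m$; that is, there exists a positive integer $n>1$ with $n/\pi(n)=m$.

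The next step is to force this $n$ into the range governed by Dusart's bounds. Since $n/\pi(n)<11$ for every integer $n\le 60183$ (a fact that can be verified by computer) and here $n/\pi(n)=m\ge 11$, we must have $n\ge 60184$. Theorem~\ref{theorem4} then applies: because $n/\pi(n)$ is an integer and $n\ge 60184$, we get $n/\pi(n)=\lfloor\log n-1\rfloor$, hence $\lfloor\log n-1\rfloor=m=c-1$.

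Finally, Theorem~\ref{theorem7} tells us that, for this same $n$, $n$ is a multiple of $\lfloor\log n-1\rfloor$ lying in the interval $[e^{\lfloor\log n-1\rfloor+1},e^{\lfloor\log n-1\rfloor+1.1}]$. Substituting $\lfloor\log n-1\rfloor=c-1$ gives $n\in[e^c,e^{c+0.1}]$ together with $\pi(n)=n/(c-1)$, which is precisely the assertion.

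The one place where a little care is needed is the passage to $n\ge 60184$: Golomb's theorem by itself only produces \emph{some} integer whose $d$-value equals $c-1$, and it is the computational bound $n/\pi(n)<11$ for $n\le 60183$ that pins this integer down in the large range where Theorems~\ref{theorem4} and~\ref{theorem7} take effect. Once that is settled, the rest is a direct substitution, so I do not anticipate any further obstacle.
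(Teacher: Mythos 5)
Your proposal is correct and follows exactly the route the paper indicates: Golomb's theorem (Theorem~\ref{theorem0}) produces an $n$ with $n/\pi(n)=c-1\ge 11$, the computational fact that $n/\pi(n)<11$ for $n\le 60183$ forces $n\ge 60184$, and Theorems~\ref{theorem4} and~\ref{theorem7} then place $n$ in $[e^c,e^{c+0.1}]$ with $\pi(n)=n/(c-1)$. The paper only sketches this in one sentence, and your write-up is a faithful and complete elaboration of that same argument.
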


\section{Conclusion and Further Discussion}

The following are the main theorems of this paper:

\vspace{\topsep}

\noindent {\bf Theorem \ref{theorem8}.} Let $n$ be an integer $\ge 60184$ and let
\begin{equation*}
f(n)=\left|\left\lfloor\log n-\lfloor\log n\rfloor-0.1\right\rfloor\right|\left\lfloor\frac{\left\lfloor n/\lfloor\log n-1\rfloor\right\rfloor\lfloor\log n-1\rfloor}{n}\right\rfloor\text{.}
\end{equation*}
If $f(n)=0$, then $\pi(n)\nmid n$. On the other hand, if $\pi(n)\mid n$, then $f(n)=1$.\qed

\vspace{\topsep}

\noindent {\bf Theorem \ref{theorem7}.} If $n\ge 60184$ and $n/\pi(n)$ is an integer, then $n$ is a multiple of $\lfloor\log n-1\rfloor$ located in the interval $[e^{\lfloor\log n-1\rfloor+1},e^{\lfloor\log n-1\rfloor+1.1}]$.\qed

\vspace{\topsep}

\noindent {\bf Theorem \ref{theorem10}.} Let $c$ be any fixed integer $\ge 12$. In the interval $[e^c,e^{c+0.1}]$ there is always an integer $n$ such that $\pi(n)$ divides $n$. In other words, in the interval $[e^c,e^{c+0.1}]$ there is always an integer $n$ such that $\pi(n)=n/(c-1)$.\qed

\vspace{\topsep}

We recall that Golomb \cite{Golomb} proved that for every integer $n>1$ there exists a positive integer $m$ such that $m/\pi(m)=n$. Suppose now that $R$ is the sequence of numbers generated by the function $d(n)=n/\pi(n)$ ($n\in\mathbb{Z}$ and $n>1$). In other words,
\begin{equation*}
R=(2,\quad1.5,\quad2,\quad1.66...,\quad2,\quad1.75,\quad2,\quad2.25,\quad2.5,\quad\dots)\text{.}
\end{equation*}
Suppose also that $S$ is the sequence of \emph{integers} generated by the function $d(n)=n/\pi(n)$. In other words,
\begin{equation*}
S=(2,\quad2,\quad2,\quad2,\quad3,\quad3,\quad3,\quad4,\quad4,\quad\dots)\text{.}
\end{equation*}
Motivated by Golomb's result and Theorem \ref{theorem10} we ask the following question:

\begin{Question}\label{question1}
Are there infinitely many positive integers $a$ such that in the interval $[e^a,e^{a+0.1}]$ there are at least two distinct positive integers $n_1$ and $n_2$ such that $\pi(n_1)\mid n_1$ and $\pi(n_2)\mid n_2$? In other words, are there infinitely many positive integers $n$ that can be expressed as $m/\pi(m)$ in more than one way?\hfill$\blacktriangleleft$
\end{Question}

Now, let $S_k$ denote the $k$th term of sequence $S$. Clearly, Question \ref{question1} is equivalent to the following question:

\begin{Question}
Are there infinitely many positive integers $k$ such that $S_k=S_{k+1}$?\hfill$\blacktriangleleft$
\end{Question}


\noindent\emph{Instituto de Educaci\'on Superior N$^\circ$28 Olga Cossettini, (2000) Rosario, Santa Fe, Argentina\\E-mail: \href{mailto:germanpaz\_ar@hotmail.com}{germanpaz\_ar@hotmail.com}}

\end{document}